



\documentclass[12pt, a4paper]{article}

\usepackage{latexsym}
\usepackage{graphicx, float}
\usepackage{amssymb,amsfonts,amsthm,amsmath,graphicx,url}

\marginparwidth 0pt \oddsidemargin 0pt \evensidemargin 0pt
\topmargin -1.6cm \textheight 25 truecm \textwidth 16.0 truecm

\long\def\delete#1{}

\newtheorem{theorem}{Theorem}
\newtheorem{lemma}[theorem]{Lemma}
\newtheorem{corollary}[theorem]{Corollary}
\newtheorem{definition}[theorem]{Definition}

\input amssym.def
\input amssym.tex

\newcommand{\bmat}[1]{\begin{bmatrix}#1\end{bmatrix}}
\newcommand{\dmat}[1]{\begin{vmatrix}#1\end{vmatrix}}


\newcommand{\be}{\begin{equation}}
\newcommand{\ee}{\end{equation}}
\newcommand{\bea}{\begin{eqnarray}}
\newcommand{\eea}{\end{eqnarray}}
\newcommand{\bean}{\begin{eqnarray*}}
\newcommand{\eean}{\end{eqnarray*}}

\def\qed{\hfill$\Box$\vspace{12pt}}

\def\la{\langle}
\def\ra{\rangle}

\def\FFF{\Bbb F}

\def\FFF{\mathbb{F}}

\def\BB{{\cal B}}

\def\PP{{\cal P}}

\def\b0{{\bf 0}}

\def\be{{\bf e}}

\def\bu{{\bf u}}
\def\bv{{\bf v}}

\def\De{\Delta}
\def\Ga{\Gamma}

\def\b{\beta}

\def\l{\lambda}
\def\s{\sigma}
\def\t{\tau}

\def\Aut{{\rm Aut}}

\def\PSU{{\rm PSU}}
\def\PGU{{\rm PGU}}

\def\GU{{\rm GU}}
\def\SU{{\rm SU}}

\def\PG{{\rm PG}}

\def\PGammaU{{\rm P\Gamma U}}

\title{Unitary graphs}
\author{Sanming Zhou\\
Department of Mathematics and Statistics\\
The University of Melbourne, VIC 3010, Australia\\
Email: smzhou@ms.unimelb.edu.au \\}

\begin{document}
\openup 0.8\jot
\maketitle


\begin{abstract}
Unitary graphs are arc-transitive graphs with vertices the flags of Hermitian unitals and edges defined by certain elements of the underlying finite fields. They played a significant role in a recent classification of a class of arc-transitive graphs that admit an automorphism group acting imprimitively on the vertices. In this paper we prove that all unitary graphs are connected of diameter two and girth three. Based on this we obtain, for any prime power $q > 2$, a lower bound of order $O(\De^{5/3})$ on the maximum number of vertices in an arc-transitive graph of degree $\De = q(q^2-1)$ and diameter two.  

{\em Key words}: Symmetric graph, arc-transitive graph, Hermitian unital, unitary graph, degree-diameter problem
\end{abstract}

\section{Introduction}

We study a family of arc-transitive graphs \cite{Biggs} associated with Hermitian unitals. Such graphs are called unitary graphs \cite{GMPZ} due to their connections with unitary groups of degree three over a Galois field. The vertices of a unitary graph are the flags of a Hermitian unital, and the adjacency relation is determined by two linear equations defining the line-components of the flags involved. Unitary graphs played an important role in a recent classification \cite{GMPZ} of a class of arc-transitive graphs that admit an automorphism group acting imprimitively on the vertices. (A graph is {\em arc-transitive} if its automorphism group is transitive on the set of ordered pairs of adjacent vertices.) With focus on combinatorial aspects of unitary graphs, in the present paper we prove that all unitary graphs are connected with large order (compared with their degrees), small diameter and small girth. Based on this we then obtain, for any prime power $q > 2$, a lower bound on the maximum order (number of vertices) of an arc-transitive graph of degree $q(q^2 - 1)$ and diameter two.  

The distance between two vertices in a graph is the length of a shortest path joining them, and $\infty$ if there is no path between the two vertices. The \textit{diameter} of a graph is the maximum distance between two vertices in the graph. The \textit{girth} of a graph is the length of a shortest cycle, and $\infty$ if the graph contains no cycle at all. Two vertices are neighbours of each other if they are adjacent in the graph. 

Denote by
\begin{equation}
\label{eq:psi}
\psi: x \mapsto x^p,\; x \in  \FFF_{q^2}
\end{equation}
the Frobenius map for the Galois field $\FFF_{q^2}$, where $p$ is a prime and $q > 2$ is a power of $p$. We postpone the definition of the unitary graph $\Ga_{r, \l}(q)$ and the $\PGU(3,q) \rtimes \la \psi^r \ra$-invariant partition $\BB$ of its vertex set to the next section (see Definition \ref{def:ugraph} and (\ref{eq:B}) respectively). The following is the first main result of this paper.

\begin{theorem}
\label{thm:con}
Let $q = p^e > 2$ be a prime power and $r \ge 1$ a divisor of $2e$. Let $\l \in \FFF_{q^2}^{*}$ be such that $\l^q$ belongs to the $\la \psi^r \ra$-orbit on $\FFF_{q^2}$ containing $\l$, and let $k = k_{r,\l}(q)$ denote the size of this $\la \psi^r \ra$-orbit. Then the unitary graph $\Ga_{r, \l}(q)$ is connected of diameter two and girth three. Moreover, the following hold for $\Ga_{r, \l}(q)$:
\begin{itemize}
\item[\rm (a)] any two vertices in different blocks of $\BB$ have at least $q^2 (q-2)$ common neighbours;
\item[\rm (b)] any two vertices in the same block of $\BB$ have exactly $k(k-1)q$ common neighbours. 
\end{itemize}
\end{theorem}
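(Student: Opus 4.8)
The plan is to regard parts (a) and (b) as the computational heart of the theorem and to recover connectivity, diameter two, and girth three as formal consequences of them. Since $q > 2$ gives $q^2(q-2) \ge q^2 > 0$, part (a) shows that any two vertices lying in distinct blocks of $\BB$ share a common neighbour and hence are at distance at most two, while part (b) controls pairs inside a common block. As $\Ga_{r,\l}(q)$ is regular of degree $q(q^2-1)$, which is far smaller than its number of vertices (the flags of the unital), the graph is certainly not complete, so its diameter is at least two; combined with the distance-at-most-two conclusion for all pairs this pins the diameter at exactly two, whence the graph is connected. For the girth, any edge together with a common neighbour of its two endpoints forms a triangle, so girth three follows as soon as the explicit adjacency description exhibits one edge whose endpoints admit a common neighbour, which the counts in (a)/(b) guarantee.

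Throughout I would lean on arc-transitivity of $G := \PGU(3,q) \rtimes \la \psi^r \ra$. Because $G$ is transitive on the vertices, to count common neighbours of an unordered pair of flags it suffices to fix a convenient base flag $\alpha$ and let the partner flag $\beta$ run over representatives of the $G_\alpha$-orbits; transitivity on ordered pairs within each relevant class then ensures that the number of common neighbours depends only on whether $\alpha$ and $\beta$ lie in the same block or in different blocks of $\BB$. Concretely, I would fix coordinates for the Hermitian unital in $\PG(2,q^2)$ placing $\alpha$ in a simple normal form, and then read off from Definition~\ref{def:ugraph} the neighbourhood of a flag as the solution set of its two defining linear equations with parameter $\l$. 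This converts ``$(P,\ell)$ is a common neighbour of $\alpha$ and $\beta$'' into an explicit system over $\FFF_{q^2}$.

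The counting then proceeds in two stages: first determine the admissible point-components $P$ (the isotropic points of the unital meeting the adjacency conditions inherited from both $\alpha$ and $\beta$), and then, for each such $P$, count the admissible line-components $\ell$ through $P$. For $\alpha, \beta$ in different blocks the combined system is generically of full rank, and after discarding the degenerate or coincident configurations a careful tally of the isotropic solutions produces the lower bound $q^2(q-2)$; here it is harmless to undercount, since only an inequality is claimed. For $\alpha, \beta$ in the same block the point-component is more tightly constrained, and the exact value $k(k-1)q$ should emerge with the factor $k(k-1)$ recording the ordered choices attached to the $\la\psi^r\ra$-orbit of size $k$ through $\l$ (this is exactly where the hypothesis that $\l^q$ lies in that orbit is used), and the factor $q$ coming from the one remaining free field parameter.

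The main obstacle I anticipate is the exact count in part (b): one must classify the relative positions of $\alpha$ and $\beta$ (whether their point-components coincide, whether the points are collinear on the unital, how their line-components meet, and so on), check that the two sets of adjacency equations are consistent in each case, and count their common isotropic solutions precisely, while tracking how the Frobenius twist $\psi^r$ and the constraint on $\l$ interact so as to yield $k(k-1)q$ on the nose rather than a mere bound. By comparison the lower bound in (a) is forgiving, and the topological conclusions are routine once (a) and (b) are in hand, apart from a small check that same-block pairs are also at distance at most two (either adjacent, or sharing one of the $k(k-1)q$ common neighbours when $k \ge 2$, with the case $k = 1$ handled by the block structure).
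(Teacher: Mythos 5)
Your overall strategy coincides with the paper's: use $2$-transitivity of $G=\PGU(3,q)\rtimes\la\psi^r\ra$ on the point set to reduce to base flags with point-entry $\infty=\la 0,1,0\ra$ or $0=\la 0,0,1\ra$, read off the neighbourhood of such a flag from the two determinantal equations in Definition~\ref{def:ugraph}, and count solutions of the resulting system over $\FFF_{q^2}$. But the proposal stops exactly where the proof begins. Everything that actually yields the numbers $q^2(q-2)$ and $k(k-1)q$ is asserted rather than derived: the paper needs two explicit parametrization lemmas (Lemmas~\ref{lem:nb} and \ref{lem:nbn}, giving the neighbourhoods of $(\infty,L)$, $(\infty,L^*)$, $(0,N(\eta))$, $(0,L^*)$ in terms of field elements $a,b,c$ with $b+b^q=a^{q+1}$, etc.), followed by a case analysis (three cases for (a), two for (b), according to whether a line-entry equals $L^*=L(\infty 0)$) in which the condition ``$L_2$ is a common neighbour'' collapses to a single linear equation such as (\ref{eq:c}) in $c$, whose coefficient vanishes for at most two values of $b$; that is where $q-2$ comes from, and the factor $k(k-1)$ in (b) comes from the requirement $\l^{q(p^{ir}-p^{jr})}\ne 1$, which forces $i\ne j$. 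None of this is in the proposal, so parts (a) and (b) — the substance of the theorem — are not proved. A secondary error: you claim transitivity ensures the common-neighbour count depends only on whether the two flags lie in the same block. That is false; $G$ is not transitive on ordered pairs of flags from distinct blocks (e.g.\ $(\infty,L^*)$ lies in a $G_{\infty,0}$-orbit of its own), which is precisely why the paper must treat three separate cases in (a) and why (a) is stated only as a lower bound.

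The one place where your plan is \emph{more} alert than it lets on is the parenthetical about same-block pairs. By Theorem~\ref{thm:ugraph} and Corollary~\ref{cor:nbn}, the entire degree $kq(q^2-1)$ of a vertex $(\s,M)$ is accounted for by its $k$ neighbours in each of the $q^3-q$ blocks $B(\t)$ with $\t\notin M$, so two vertices in the same block are \emph{never} adjacent; hence for same-block pairs the distance-two claim rests entirely on part (b), which gives $k(k-1)q=0$ common neighbours when $k=1$. Your phrase ``the case $k=1$ handled by the block structure'' is therefore not a small check but a genuine obstruction that your argument (and, as written, the paper's own final paragraph, which invokes only (a)) does not resolve. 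Any complete write-up must either supply an explicit length-two path between two flags of $B(\s)$ when $k=1$ or restrict the diameter claim accordingly; it cannot be waved through.
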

 
Given integers $\De, D \ge 1$, the well known degree-diameter problem \cite{MS} asks for finding the maximum order $N_{\De, D}$ of a graph of maximum degree $\De$ and diameter at most $D$ together with the corresponding extremal graphs. Denote by $N^{at}(\De, D)$ the maximum order of an arc-transitive graph of degree $\De$ and diameter at most $D$. Based on Theorem \ref{thm:con} we obtain the following lower bound on $N^{at}(\De, 2)$. 

\begin{theorem}
\label{thm:deg diam}
For any prime power $q > 2$, 
\begin{equation}
\label{eq:deg diam}
N^{at}(q(q^2 - 1), 2) \ge q^2 (q^3 + 1).
\end{equation}
In particular, for $\De = q(q^2 - 1)$,  
\begin{equation}
\label{eq:deg diam1}
N^{at}(\De, 2) \ge \De^{5/3} + \De + \De^{2/3} + \De^{1/3}.
\end{equation}
\end{theorem}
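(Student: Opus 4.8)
The plan is to produce, for each prime power $q>2$, one arc-transitive graph of degree $q(q^2-1)$ and diameter two whose order is exactly $q^2(q^3+1)$; inequality (\ref{eq:deg diam}) is then immediate from the definition of $N^{at}$, and (\ref{eq:deg diam1}) follows by an elementary estimate. The substantive input is Theorem \ref{thm:con}, which I would invoke as a black box for the diameter; everything else is bookkeeping.

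First I would fix admissible parameters. Writing $q=p^e$, the Frobenius $\psi$ has order $2e$ on $\FFF_{q^2}$, so the choice $r=2e$ makes $\la\psi^r\ra$ trivial and each of its orbits a singleton; thus $k=1$, and the hypothesis of Theorem \ref{thm:con} that $\l^q$ lie in the $\la\psi^r\ra$-orbit of $\l$ reduces to $\l^q=\l$, i.e.\ $\l\in\FFF_q^*$. As $q>2$ such a $\l$ exists, so $\Ga_{r,\l}(q)$ is a well-defined unitary graph to which Theorem \ref{thm:con} applies. Next I would read off its two numerical invariants. By Definition \ref{def:ugraph} the vertices are the flags of the Hermitian unital, a $2$-$(q^3+1,q+1,1)$ design; this design has $q^3+1$ points, each on $q^2$ lines, so the number of flags, hence the order of $\Ga_{r,\l}(q)$, is $q^2(q^3+1)$. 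Its degree is $q(q^2-1)$ and it is arc-transitive, both by the definition of unitary graphs. Since Theorem \ref{thm:con} gives diameter two, feeding these facts into the definition of $N^{at}$ yields $N^{at}(q(q^2-1),2)\ge q^2(q^3+1)$, which is (\ref{eq:deg diam}).

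For (\ref{eq:deg diam1}) I would set $\De=q(q^2-1)=q^3-q$ and note $q^2(q^3+1)=q^5+q^2$. From $\De<q^3$ one gets $\De^{2/3}<q^2$ and $\De^{1/3}<q$ immediately, while the one estimate needing an argument, $\De^{5/3}<q^5-q^3$, follows on cubing: it is equivalent to $q^5(q^2-1)^5<q^9(q^2-1)^3$, i.e.\ to $(q^2-1)^2<q^4$, which clearly holds. Summing the four bounds gives
$$\De^{5/3}+\De+\De^{2/3}+\De^{1/3}<(q^5-q^3)+(q^3-q)+q^2+q=q^5+q^2=q^2(q^3+1),$$
so (\ref{eq:deg diam}) implies (\ref{eq:deg diam1}). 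The only points where anything could go wrong are the appeal to Theorem \ref{thm:con} and the correct reading of the order and degree from Definition \ref{def:ugraph}; the remaining inequalities are routine, so I expect no genuine obstacle once Theorem \ref{thm:con} is in hand.
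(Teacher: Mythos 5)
Your proposal is correct and follows essentially the same route as the paper: the paper takes $r=e$ and $\l\in\FFF_q^*$ (which forces $k=1$ and yields the same graph as your choice $r=2e$), invokes Theorem \ref{thm:con} for the diameter, and concludes with the same elementary estimate for (\ref{eq:deg diam1}). The only nitpick is that the degree $kq(q^2-1)$ and the arc-transitivity come from Theorem \ref{thm:ugraph} rather than directly from Definition \ref{def:ugraph}, so that is where the citation should point.
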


As far as we know, these bounds are the first general lower bounds for the arc-transitive version of the degree-diameter problem, despite the fact that a huge amount of work has been done \cite{MS} on this problem for general graphs and its restrictions to several other graph classes (e.g. bipartite graphs, vertex-transitive graphs, Cayley graphs). The reader may compare (\ref{eq:deg diam1}) with the well known Moore bound $N_{\De, 2} \le \De^2 + 1$ (for general graphs) and consult \cite{MS} for the state-of-the-art of the degree-diameter problem.  

The extremal graphs that prove (\ref{eq:deg diam}) form a subfamily of the family of unitary graphs as we will see in the proof of Theorem \ref{thm:deg diam}. The smallest unitary graphs arise when $q=p=3$, and in this case (\ref{eq:deg diam}) gives $N^{at}(24, 2) \ge 3^2 (3^3 + 1) = 252$. Our graphs are constructed from Hermitian unitals, which are well-known doubly point-transitive linear spaces. In this regard we would like to mention that some efforts have been made to construct graphs using certain finite geometries that give good bounds for the vertex-transitive version of the degree-diameter problem; see \cite{ANS, KKKRS} for example.

We will give the definition of the unitary graph $\Ga_{r, \l}(q)$ and related concepts in the next section. The proof of Theorems \ref{thm:con} and \ref{thm:deg diam} together with some preparatory results will be given in Section \ref{sec:main}. We conclude the paper with remarks on Theorem \ref{thm:deg diam} and related questions on the order of $N^{at}(\De, 2)$.

\section{Unitary graphs}

In order to make this paper reasonably self-contained, we first gather basic definitions and results on unitary groups and Hermitian unitals. After this we will give the definition of a unitary graph. The reader is referred to \cite{Dixon-Mortimer, HP, O'Nan, Taylor} for more information on unitary groups and Hermitian unitals, and to \cite{Dixon-Mortimer} for undefined terminology on permutation groups.  

Let $q = p^e > 2$ with $p$ a prime. The mapping $\s: x \mapsto x^q$ is an automorphism of the Galois field $\mathbb{F}_{q^2}$. The Galois field $\mathbb{F}_{q}$ is then the fixed field of this automorphism. Let $V(3, q^2)$ be a 3-dimensional vector space over $\mathbb{F}_{q^2}$ and $\b: V(3, q^2) \times V(3, q^2) \rightarrow \mathbb{F}_{q^2}$ a nondegenerate $\s$-Hermitian form (that is, $\b$ is sesquilinear such that $\b(a \bu, b \bv) = a b^q \b(\bu, \bv)$ and $\b(\bu, \bv) = \b(\bv, \bu)^q$). The full unitary group $\Gamma U(3, q)$ consists of those semilinear transformations of $V(3, q^2)$ that induce a collineation of $\PG(2, q^2)$ which commutes with $\b$. The general unitary group $\GU(3, q) = \Gamma U(3, q) \cap GL(3, q^2)$ is the group of nonsingular linear transformations of $V(3, q^2)$ leaving $\b$ invariant. The projective unitary group $\PGU(3, q)$ is the quotient group $\GU(3, q)/Z$, where $Z = \{a I: a \in \mathbb{F}_{q^2}, a^{q+1} = 1\}$ is the center of $\GU(3,q)$ and $I$ the identity transformation. The special projective unitary group $\PSU(3, q)$ is the quotient group $\SU(3, q)Z/Z$, where $\SU(3, q)$ is the subgroup of $\GU(3, q)$ consisting of linear transformations of unit determinant. $\PSU(3, q)$ is equal to $\PGU(3, q)$ if $3$ is not a divisor of $q+1$, and is a subgroup of $\PGU(3, q)$ of index 3 otherwise. It is well known that the automorphism group of $\PSU(3, q)$ is given by the semi-direct product $\PGammaU(3, q) := \PGU(3, q) \rtimes \la \psi \ra$, where $\psi$ is the Frobenius map as defined in (\ref{eq:psi}).

Choosing an appropriate basis for $V(3, q^2)$ allows us to identify vectors of $V(3, q^2)$ with their coordinates and express the corresponding Hermitian matrix of $\b$ by 
$$
D = \bmat{-1 & 0 & 0\\0 & 0 & 1\\0 & 1 & 0}. 
$$
Thus, for $\bu_1 = (x_1, y_1, z_1), \bu_2 = (x_2, y_2, z_2) \in V(3, q^2)$,
$$
\b(\bu_1, \bu_2) = -x_1 x_2^q + y_1 z_2^q + z_1 y_2^q.
$$ 
If $\b(\bu_1, \bu_2) = 0$, then $\bu_1$ and $\bu_2$ are called {\em orthogonal} (with respect to $\b$). A vector $\bu = (x, y, z) \in V(3, q^2)$ is called {\em isotropic} if it is orthogonal to itself, that is, $x^{q+1} = y z^q + z y^q$, and {\em nonisotropic} otherwise. Let 
$$
X = \{\la x, y, z \ra: x, y, z \in \FFF_{q^2}, x^{q+1} = y z^q + z y^q\}
$$
be the set of 1-dimensional subspaces of $V(3, q^2)$ spanned by its isotropic vectors. Hereinafter $\la \bu \ra = \la x, y, z \ra$ denotes the 1-dimensional subspace of $V(3, q^2)$ spanned by $\bu = (x, y, z) \in V(3, q^2)$. 
The elements of $X$ are called the {\em absolute points}. It is well known that $|X| = q^3 + 1$, $\PSU(3, q)$ is 2-transitive on $X$, and $\PGammaU(3, q)$ leaves $X$ invariant. 

If $\bu_1$ and $\bu_2$ are isotropic, then the vector subspace $\la \bu_1, \bu_2 \ra$ of $V(3, q^2)$ spanned by them contains exactly $q+1$ absolute points. The {\em Hermitian unital} $U_{H}(q)$ is the block design \cite{HP} with point set $X$ in which a subset of $X$ is a block (called a {\em line}) precisely when it is the set of absolute points contained in some $\la \bu_1, \bu_2 \ra$. It is well known \cite{O'Nan, Taylor} that $U_{H}(q)$ is a linear space with $q^3+1$ points, $q^2(q^2 - q + 1)$ lines, $q+1$ points in each line, and $q^2$ lines meeting at a point. (A {\em linear space} \cite{Beth-Jung-Lenz} is an incidence structure of points and lines such that any point is incident with at least two lines, any line with at least two points, and any two points are incident with exactly one line.) It was proved in \cite{O'Nan, Taylor} that $\Aut(U_{H}(q)) = \PGammaU(3, q)$. Thus, for every $G$ with $\PSU(3,q) \le G \le \PGammaU(3,q)$, $U_{H}(q)$ is a $G$-doubly point-transitive linear space. This implies that $G$ is also block-transitive and flag-transitive on $U_{H}(q)$, where a {\em flag} is an incident point-line pair.   

A line of $\PG(2, q^2)$ contains either one absolute point or $q+1$ absolute points. In the latter case the set of such $q+1$ absolute points is a line of $U_{H}(q)$; all lines of $U_{H}(q)$ are of this form. So we can represent a line of $U_{H}(q)$ by the homogenous equation of the corresponding line of $\PG(2, q^2)$. 

Denote
$$
V(q) = \mbox{the set of flags of}\ \,U_{H}(q).
$$
\begin{definition}
\label{def:ugraph}
{\em (\cite{GMPZ})~Let $q = p^e > 2$ be a prime power and $r \ge 1$ a divisor of $2e$. Suppose $\l \in \FFF_{q^2}^{*}$ is such that $\l^q$ belongs to the $\la \psi^r \ra$-orbit on $\FFF_{q^2}$ containing $\l$. The {\em unitary graph} $\Ga_{r, \l}(q)$ is defined to be the graph with vertex set $V(q)$ such that $(\la a_1, b_1, c_1 \ra, L_1)$, $(\la a_2, b_2, c_2 \ra, L_2) \in V(q)$ are adjacent if and only if $L_1$ and $L_2$ are given by:
\begin{equation}
\label{eq:l1}
L_1: \dmat{x & a_1 & a_0 + a_2\\y & b_1 & b_0 + b_2\\z & c_1 & c_0 + c_2} = 0
\end{equation}
\begin{equation}
\label{eq:l2}
L_2: \dmat{x & a_2 & a_0+\l^{qp^{ir}} a_1 \\y & b_2 & b_0 + \l^{qp^{ir}}b_1 \\z & c_2 & c_0 + \l^{qp^{ir}}c_1} = 0
\end{equation}
for an integer $0 \le i < 2e/r$ and a nonisotropic $(a_0, b_0, c_0) \in V(3, q^2)$ orthogonal to both $(a_1, b_1, c_1)$ and $(a_2, b_2, c_2)$.}
\end{definition}

The requirement on $\l$ is equivalent to that $\l^{p^{tr}} = \l^q$ for at least one integer $0 \le t < 2e/r$. (But $\Ga_{r, \l}(q)$ is independent of the choice of $t$.) This ensures that $\Ga_{r, \l}(q)$ is well defined as an undirected graph. In fact, since $r$ is a divisor of $2e$, we have $(j+t)r = 2e$ for some integer $j$. Since $\l = \l^{qp^{jr}}$, the equations of $L_1$ and $L_2$ can be rewritten as
$$
L_2: \dmat{x & a_2 & \l a_0+\l^{qp^{ir}+1} a_1 \\y & b_2 & \l b_0 + \l^{qp^{ir}+1}b_1 \\z & c_2 & \l c_0 + \l^{qp^{ir}+1}c_1} = 0,\;\;\;
L_1: \dmat{x & \l^{qp^{ir}+1} a_1 & \l a_0+\l^{qp^{jr}} a_2 \\y & \l^{qp^{ir}+1} b_1 & \l b_0 + \l^{qp^{jr}} b_2 \\z & \l^{qp^{ir}+1} c_1 & \l c_0 + \l^{qp^{jr}} c_2} = 0.
$$
Hence the adjacency relation of $\Ga_{r,\l}(q)$ is symmetric. 

Define 
$$
k_{r,\l}(q) = \frac{|\la \psi^r \ra|}{|\la \psi^r \ra_{\l}|},
$$
where $\la \psi^r \ra_{\l}$ is the stabilizer of $\l$ in $\la \psi^r \ra$. Then $k_{r,\l}(q)$ is the size of the $\la \psi^r \ra$-orbit on $\FFF_{q^2}$ containing $\l$, or the least integer $j \ge 1$ such that $\l^{p^{jr}} = \l$. Of course $k_{r,\l}(q)$ is a divisor of $2e/r$. 

Denote by $B(\s)$ the set of flags of $U_{H}(q)$ with point-entry $\s \in X$. Then
\begin{equation}
\label{eq:B}
\BB = \{B(\s): \s \in X\}
\end{equation}
is a partition of $V(q)$ into $q^3+1$ blocks each with size $q^2$. 

Denote by $L(\s\t)$ the unique line of $U_H(q)$ through two distinct points $\s, \t \in X$.  
Denote
$$
\infty = \la 0, 1, 0 \ra; \quad\quad 0 = \la 0, 0, 1 \ra
$$
$$
L: x = z;\quad\quad N: y=\l^q x;\quad\quad L^*: x = 0.
$$ 
Then $(\infty, L), (0, N) \in V(q)$ and $L^* = L(\infty 0)$.

An {\em arc} of a graph is an ordered pair of adjacent vertices. A graph $\Ga$ is {\em $G$-arc transitive} if $G \le \Aut(\Ga)$ is transitive on the set of vertices of $\Ga$ and also transitive on the set of arcs of $\Ga$. This is to say that any arc of $\Ga$ can be mapped to any other arc of $\Ga$ by an element of $G$, and the same statement holds for vertices. A partition $\PP$ of the vertex set of $\Ga$ is {\em $G$-invariant} if for any block $P \in \PP$ and $g \in G$ the image of $P$ under $g$, $\{\s^g: \s \in P\}$, is a block of $\PP$, where $\s^g$ is the image of $\s$ under $g$. The {\em quotient graph} $\Ga_{\PP}$ is the graph with vertex set $\PP$ such that $P, Q \in \PP$ are adjacent if and only if there is at least one edge of $\Ga$ between $P$ and $Q$. If for any two adjacent $P, Q \in \PP$, all vertices of $P$ except only one have neighbours in $Q$ in the graph $\Ga$, then $\Ga$ is called an {\em almost multicover} \cite{GMPZ} of $\Ga_{\BB}$. (Since $\Ga$ is $G$-arc transitive, if all vertices of $P$ except one have neighbours in $Q$, then all vertices of $Q$ except one have neighbours in $P$, and the subgraph of $\Ga$ induced by $P \cup Q$ with these two exceptional vertices deleted, is a regular bipartite graph.)

Unitary graphs were introduced in \cite{GMPZ} during the classification of a class of imprimitive arc-transitive graphs. A major step towards this classification is the following result which will be used in our proof of Theorem \ref{thm:con}.
\begin{theorem}
\label{thm:ugraph}
(\cite{GMPZ}) $\Ga_{r, \l}(q)$ is a $\PGU(3,q) \rtimes \la \psi^r \ra$-arc transitive graph of degree $kq(q^2-1)$ (where $k = k_{r,\l}(q)$) that admits $\BB$ as a $\PGU(3,q) \rtimes \la \psi^r \ra$-invariant partition such that the quotient graph $\Ga_{r, \l}(q)_{\BB}$ is a complete graph and $\Ga_{r, \l}(q)$ is an almost multicover of $\Ga_{r, \l}(q)_{\BB}$. Moreover, for each pair of distinct points $\s, \t$ of $U_{H}(q)$, $(\s, L(\s \t))$ is the only vertex in $B(\s)$ that has no neighbour in $B(\t)$.
\end{theorem}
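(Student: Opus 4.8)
The plan is to read off all six assertions from a single analysis of the neighbourhood of one flag, exploiting the geometry of the Hermitian form $\b$ together with the action of $\psi^r$ on the scalars $\l^{qp^{ir}}$. First I would settle the structural points. Since $G:=\PGU(3,q)\rtimes\la\psi^r\ra$ is a subgroup of $\PGammaU(3,q)=\Aut(U_H(q))$, it permutes the flags, and the projection $(\s,L)\mapsto\s$ is $G$-equivariant; hence $\BB$ is $G$-invariant, and the flag-transitivity of $G$ (from double point-transitivity) gives vertex-transitivity. The first genuine task is to check that $G\le\Aut(\Ga_{r,\l}(q))$, i.e. that the relations (\ref{eq:l1})--(\ref{eq:l2}) are $G$-stable. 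Elements of $\PGU(3,q)$ preserve $\b$, hence carry orthogonal pairs to orthogonal pairs and (non)isotropic vectors to the same kind; applying such a $g$ to an edge turns the data $((a_0,b_0,c_0),i)$ into admissible data for the image pair. For $\psi^r$ one applies the field automorphism to both determinants and uses the hypothesis that $\l^q$ lies in the $\la\psi^r\ra$-orbit of $\l$ to re-index the exponent $qp^{ir}$; this is exactly where the orbit condition is needed.

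The heart of the argument is a description of the neighbours of a fixed vertex $u=(\s_1,L_1)$. Because $(a_0,b_0,c_0)$ must be orthogonal to both $\s_1$ and the neighbour's point $\s_2$, it spans the one-dimensional space $\la\s_1,\s_2\ra^{\perp}$ whenever $\s_1\neq\s_2$. I would then show two things. If $\s_2\in L_1$ (in particular if $\s_2=\s_1$), write $W_1$ for the $2$-space carrying the secant line $L_1$; since $L_1$ meets $X$ in $q+1$ points the form on $W_1$ is nondegenerate, so $W_1\cap W_1^{\perp}=\{0\}$, and (\ref{eq:l1}) would force $(a_0,b_0,c_0)\in W_1\cap W_1^{\perp}=\{0\}$, which is impossible: thus $u$ has no neighbour in $B(\s_2)$. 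If instead $\s_2\notin L_1$, then the generator $w$ of $\la\s_1,\s_2\ra^{\perp}$ is nonisotropic (the only absolute point on the tangent $\s_1^{\perp}$ is $\s_1$), and intersecting $L_1$ with the plane $\la w,\s_2\ra$ supplies representatives realizing (\ref{eq:l1}), so neighbours exist at $\s_2$. Hence the neighbours of $u$ lie exactly in the blocks $B(\s_2)$ with $\s_2\notin L_1$, of which there are $|X|-(q+1)=q^3-q=q(q^2-1)$.

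Next I would count the neighbours inside one admissible block. Normalizing the representatives of $\s_1$ and $\s_2$ pins down $w$, after which (\ref{eq:l2}) shows that $L_2$ is governed solely by $\l^{qp^{ir}}=\psi^{ir}(\l^q)$. As $i$ runs over $0\le i<2e/r$ these scalars sweep out the $\la\psi^r\ra$-orbit of $\l^q$, which by hypothesis equals that of $\l$ and has size $k$; distinct scalars give distinct lines $L_2$, so $u$ has exactly $k$ neighbours in each such block, and the degree is $k\cdot q(q^2-1)$. I would record the model case $\s_1=\infty$, $\s_2=0$, where $w=(1,0,0)$ and, after normalization, $L_2\colon y=\l^{qp^{ir}}x$, recovering the base arc $(\infty,L)\sim(0,N)$ at $i=0$, and transport it to all pairs by the $2$-transitivity of $\PSU(3,q)$ on $X$.

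The remaining assertions now fall out at once. Since $(\s,M)$ has a neighbour in $B(\t)$ precisely when $\t\notin M$, and $\t\in M$ holds for exactly one line through $\s$, namely $M=L(\s\t)$, we obtain simultaneously that $\Ga_{\BB}$ is complete, that $\Ga_{r,\l}(q)$ is an almost multicover of $\Ga_{\BB}$, and that $(\s,L(\s\t))$ is the unique vertex of $B(\s)$ with no neighbour in $B(\t)$. For arc-transitivity I would reduce to $u=(\infty,L)$ and show $G_u$ is transitive on the $kq(q^2-1)$ neighbours: the order identity $|\PGU(3,q)_{(\infty,L)}|=q(q^2-1)$ matches the number of points off $L$, so one checks that this flag stabilizer acts sharply transitively on them, while $\psi^r$ (sending $i\mapsto i+1$) cycles the $k$ lines $L_2$ over a fixed $\s_2$; conjugating this cyclic action to each target point then gives transitivity on all neighbours. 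I expect the two hardest points to be (i) the $G$-stability of the adjacency, where the exponent bookkeeping under $\psi^r$ must be reconciled with the $\l$-orbit hypothesis, and (ii) the sharp transitivity of $\PGU(3,q)_{(\infty,L)}$ on the $q(q^2-1)$ points off $L$, for which one needs the explicit structure of this flag stabilizer.
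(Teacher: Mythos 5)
You should first note that the paper does not prove this statement at all: it is quoted from \cite{GMPZ}, so there is no in-paper proof to compare against. The closest material in the paper is Lemmas~\ref{lem:nb}--\ref{lem:nbn} and Corollary~\ref{cor:nbn}, which recover the ``$k$ neighbours in each admissible block'' fact by explicit coordinate computation from fixed representatives $(0,1,0)$ and $(0,0,1)$, rather than by your synthetic argument. Your skeleton is the right one: showing that $(\s_1,L_1)$ has neighbours in $B(\s_2)$ exactly when $\s_2\notin L_1$ delivers the complete quotient, the almost multicover property and the uniqueness of $(\s,L(\s\t))$ in one stroke, and $k$ neighbours per admissible block then gives the degree $kq(q^2-1)$.

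The genuine gap is the sentence ``normalizing the representatives of $\s_1$ and $\s_2$ pins down $w$, after which (\ref{eq:l2}) shows that $L_2$ is governed solely by $\l^{qp^{ir}}$.'' Definition~\ref{def:ugraph} is phrased in terms of \emph{vectors} $(a_1,b_1,c_1)$, $(a_2,b_2,c_2)$, $(a_0,b_0,c_0)$, and the entire content of the degree count lies in how their scalar freedom is resolved. Requiring (\ref{eq:l1}) to return the given line $L_1$ only ties the scaling of $(a_0,b_0,c_0)$ to that of $(a_2,b_2,c_2)$ (it fixes the point $L_1\cap\la w,\s_2\ra$ and hence the ratio of those two scalars); the scaling $\a$ of $(a_1,b_1,c_1)$ is left untouched and enters (\ref{eq:l2}) only through the product $\l^{qp^{ir}}\a$. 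If that scaling were genuinely free, $(\s_1,L_1)$ would acquire $q^2-1$ neighbours in each admissible block rather than $k$, and the degree, Corollary~\ref{cor:nbn} and Theorem~\ref{thm:con}(b) would all be false. So you must commit to a definite normalization of the point-representatives, then re-verify that the adjacency relation, its symmetry (which the paper checks precisely by \emph{rescaling} representatives in the display after Definition~\ref{def:ugraph}) and its invariance under $\PGU(3,q)\rtimes\la\psi^r\ra$ all survive that normalization, and only then read off the $k$ lines $L_2$. This is the core of the theorem and cannot be waved through in one clause. Two smaller points: your $W_1\cap W_1^{\perp}$ argument does not literally cover $\s_2=\s_1$, since there $(a_0,b_0,c_0)\notin W_1^{\perp}$ (it is nonisotropic); the correct contradiction is that $W_1=\la u_1,u_0\ra$ would be a degenerate (tangent) plane, so $L_1$ would not be a line of $U_{H}(q)$. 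And the order identity $|\PGU(3,q)_{(\infty,L)}|=q(q^2-1)$ yields \emph{sharp} transitivity on the points off $L$ only after transitivity itself has been established, which still requires an argument.
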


\section{Proof of Theorems \ref{thm:con} and \ref{thm:deg diam}}
\label{sec:main}

Throughout this section we denote
$$
\Ga = \Ga_{r, \l}(q); \quad\quad G = \PGU(3,q) \rtimes \la \psi^r \ra; \quad\quad k = k_{r,\l}(q).
$$ 
We need the following two lemmas in the proof of Theorem \ref{thm:con}.

\begin{lemma}
\label{lem:nb}
\begin{itemize}
\item[\rm (a)] $(\la \bu_2 \ra, L_2) \in V(q)$ is adjacent to $(\infty, L)$ in $\Ga$ if and only if there exist $0 \le i < k$, $a \in \FFF_{q^2} \setminus \{1\}$, $b \in \FFF_{q^2}$ and $c \in \FFF^*_{q^2}$ with $b + b^q = a^{q+1}$, such that 
\begin{itemize}
\item[\rm (i)] $\bu_2 = (a_2, b_2, c_2)$ satisfies $a_2 = ac/(1-a), b_2 = bc/(1-a)$ and $c_2 = c/(1-a)$;
\item[\rm (ii)] $L_2$ is given by 
\begin{equation}
\label{eq:nb1}
N^i_{a, b, c}: \; (\l^{qp^{ir}} + a^q c)x - cy - (\l^{qp^{ir}} a + b^q c)z = 0.
\end{equation}
\end{itemize}
\item[\rm (b)] $(\la \bu_2 \ra, L_2) \in V(q)$ is adjacent to $(\infty, L^*)$ if and only if there exist $0 \le i < k$ and $a, b, c \in \FFF^*_{q^2}$ with $b + b^q = a^{q+1}$ such that 
\begin{itemize}
\item[\rm (i)] $\bu_2 = (a_2, b_2, c_2)$ satisfies $a_2 = ac, b_2 = bc$ and $c_2 = c$;
\item[\rm (ii)] $L_2$ is given by 
\begin{equation}
\label{eq:nb4}
M^i_{a, b, c}: \; (a^{q+1}c - \l^{qp^{ir}})x - acy + a(\l^{qp^{ir}} - b^q c)z = 0.
\end{equation}
\end{itemize}
\end{itemize}
\end{lemma}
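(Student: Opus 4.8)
The plan is to apply Definition~\ref{def:ugraph} with the first vertex fixed, taking $(a_1,b_1,c_1)=(0,1,0)$ for the point $\infty$, and to read off the constraints forced by the two line equations. I would begin with the conditions that are independent of the integer $i$. Orthogonality of $(a_0,b_0,c_0)$ to $(0,1,0)$ with respect to $\b$ gives at once $c_0=0$. Expanding the determinant in (\ref{eq:l1}) along its middle column $(0,1,0)$ collapses it to $(c_0+c_2)x-(a_0+a_2)z$; demanding that this be $L: x=z$ in part~(a) forces $a_0+a_2=c_0+c_2$, i.e.\ $a_0=c_2-a_2$, while demanding that it be $L^*: x=0$ in part~(b) forces $a_0+a_2=0$, i.e.\ $a_0=-a_2$. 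In either case the point-entry turns out to have nonzero last coordinate ($c_2\neq0$), and since $c_0=0$ the nonisotropy of $(a_0,b_0,c_0)$ is equivalent to $a_0\neq0$; this single inequality is the source of the exclusions $a\neq1$ in~(a) and $a\neq0$ in~(b) once the point is written in normalized form.

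Because $c_2\neq0$, I would normalize $\la\bu_2\ra=\la a,b,1\ra$, so that the isotropy of $\bu_2$ becomes exactly $b+b^q=a^{q+1}$, and write $(a_2,b_2,c_2)$ as a scalar multiple of $(a,b,1)$. The condition on $L_1$ together with $c_0=0$ couples the scale of $(a_0,b_0,c_0)$ to that of $\bu_2$ (indeed $(a_0,b_0,c_0)$ must lie along the common perpendicular direction $(1,a^q,0)$ of $(0,1,0)$ and $\bu_2$), and I would package this single remaining scaling into the parameter $c\in\FFF_{q^2}^{*}$, which reproduces $\bu_2=(ac/(1-a),bc/(1-a),c/(1-a))$ in~(a) and $\bu_2=(ac,bc,c)$ in~(b); here $c\neq0$ precisely because $a_0\neq0$. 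The last orthogonality condition $\b((a_0,b_0,c_0),\bu_2)=0$ then determines $b_0=a_0a_2^{q}/c_2^{q}$.

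Finally I would substitute these values into (\ref{eq:l2}), whose third column is $(a_0,\;b_0+\l^{qp^{ir}},\;0)$ since $c_0=0$, and expand the determinant (the result automatically vanishes at $X=\bu_2$, so the produced $L_2$ passes through $\la\bu_2\ra$ and the pair is a flag). The coefficients of $x$ and $y$ emerge immediately; the coefficient of $z$ is where the computation is delicate, and I expect it to be the main obstacle. After inserting $b_0=a_0a_2^{q}/c_2^{q}$ and cancelling the common scale carried by the representative, the $z$-coefficient reduces to the stated one exactly upon replacing $a^{q+1}$ by $b+b^q$; this is the one place where the isotropy relation is used, and it is what makes the equations close up into $N^{i}_{a,b,c}$ in~(a) and $M^{i}_{a,b,c}$ in~(b). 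Each step above is an equivalence, so running the computation backwards supplies the converse and yields the claimed ``if and only if''. The reduction of the range of $i$ from $0\le i<2e/r$ to $0\le i<k$ is immediate, since $\l^{qp^{ir}}$, and hence each of the lines $N^{i}_{a,b,c}$ and $M^{i}_{a,b,c}$, depends on $i$ only modulo $k=k_{r,\l}(q)$.
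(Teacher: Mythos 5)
Your proposal is correct and follows essentially the same route as the paper's proof: fix $\bu_1=(0,1,0)$, deduce $c_0=0$ and $a_0\neq 0$, read off $a_0+a_2=c_0+c_2\neq 0$ (resp.\ $a_0=-a_2$, $c_2\neq 0$) from the equation of $L_1$, solve the remaining orthogonality for $b_0$, normalize the point to $\la a,b,1\ra$ with the leftover scale as $c$, and substitute into the equation of $L_2$, using $b+b^q=a^{q+1}$ to put the $z$-coefficient in the stated form. The only differences are cosmetic (your explicit remark on reducing the range of $i$ modulo $k$ is a detail the paper leaves implicit).
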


\begin{proof}
(a) Denote $\bu_1 = (0, 1, 0)$. Then $(\la \bu_2 \ra, L_2)$ is adjacent to $(\infty, L)$ if and only if there exist an integer $0 \le i < k$ and a nonisotropic $\bu_0 = (a_0, b_0, c_0) \in V(3, q^2)$ orthogonal to both $\bu_1$ and $\bu_2$ such that $L$ and $L_2$ are given by (\ref{eq:l1}) and (\ref{eq:l2}) respectively. It is clear that (\ref{eq:l1}) gives $L: x=z$ if and only if $c_0 + c_2 = a_0 + a_2 \ne 0$. Since $\bu_0, \bu_1$ are orthogonal, we have $c_0 = 0$ and so $c_2 = a_0 + a_2 \ne 0$. Using this and the assumption that $\bu_0$ is nonisotropic, we obtain $a_0 \ne 0$. Since $\bu_0, \bu_2$ are orthogonal, we then have $b_0 = a_0 (a_2/(a_0 + a_2))^q$. Since $\bu_2$ is isotropic, we have $(a_0 + a_2)^q b_2 + (a_0 + a_2) b_2^q = a_2^{q+1}$. Setting $a = a_2/(a_0 + a_2)$, $b = b_2/(a_0 + a_2)$ and $c = a_0$, we have $a \in \FFF_{q^2} \setminus \{1\}$, $c \in \FFF^*_{q^2}$, $b + b^q = a^{q+1}$, $a_2 = ac/(1-a), b_2 = bc/(1-a)$ and $c_2 = c/(1-a)$. One can check that $L_2$ given by (\ref{eq:l2}) is exactly $N^i_{a, b, c}$ as shown in (\ref{eq:nb1}). Conversely, if these conditions are satisfied, then $(\la \bu_2 \ra, L_2)$ is adjacent to $(\infty, L)$.

(b) Let $\bu_1 = (0, 1, 0)$. Then $(\la \bu_2 \ra, L_2)$ is adjacent to $(\infty, L^*)$ if and only if there exist an integer $0 \le i < k$ and a nonisotropic $\bu_0 = (a_0, b_0, c_0) \in V(3, q^2)$ orthogonal to both $\bu_1$ and $\bu_2$ such that $L^*$ and $L_2$ are given by (\ref{eq:l1}) and (\ref{eq:l2}) respectively. Since $\bu_0$ and $\bu_1$ are orthogonal, we have $c_0 = 0$. Since $\bu_0$ is nonisotropic, we then have $a_0 \ne 0$. One can see that (\ref{eq:l1}) becomes 
$c_2 x - (a_0 + a_2)z = 0$, which gives $L^*$ if and only if $c_2 \ne 0$ and $a_0 = -a_2$. Since $\bu_0$ and $\bu_2$ are orthogonal, we have $-a_0 a_2^q + b_0 c_2^q = 0$ and so $b_0 = -a_2^{q+1}/c_2^q$. Since $\bu_2$ is isotropic, we have $-(a_2/c_2)^{q+1} + (b_2/c_2) + (b_2/c_2)^q = 0$. Set $a = a_2/c_2$, $b = b_2/c_2$ and $c=c_2$. Then $a, b, c \ne 0$, $b+b^q=a^{q+1}$, $\bu_2 = (ac, bc, c)$, and (\ref{eq:l2}) can be simplified to give (\ref{eq:nb4}). 
\qed
\end{proof}

It is known that every line of $U_H(q)$ through $0$ other than $L^*$ is of the form:
$$
N(\eta): y = \eta x,\;\,\mbox{where $\eta \in \FFF_{q^2}^*$}.
$$

\begin{lemma}
\label{lem:nbn}
\begin{itemize}
\item[\rm (a)] 
$(\la \bu_2 \ra, L_2) \in V(q)$ is adjacent to $(0, N(\eta))$ if and only if there exist $0 \le i < k$,
$f \in \FFF_{q^2} \setminus \{1\}, g \in \FFF_{q^2}$ and $h \in \FFF^*_{q^2}$ with $\eta^q g + \eta g^q = f^{q+1}$, such that
\begin{itemize}
\item[\rm (i)] $\bu_2 = (a_2, b_2, c_2)$ satisfies $a_2 = fh/(1-f), b_2 = \eta h/(1-f)$ and $c_2 = gh/(1-f)$;
\item[\rm (ii)] $L_2$ is given by 
\begin{equation}
\label{eq:nb3}
L(\eta)^i_{f,g,h}:\; \left(\l^{qp^{ir}} \eta^q + f^q h \right)x - \left(\l^{qp^{ir}} \eta^{q-1} f + g^q h\right)y - \eta^q h z = 0.
\end{equation}
\end{itemize}
\item[\rm (b)] $(\la \bu_2 \ra, L_2) \in V(q)$ is adjacent to $(0, L^*)$ if and only if there exist $0 \le i < k$ and $f, g, h \in \FFF^*_{q^2}$ with $g + g^q = f^{q+1}$ such that 
\begin{itemize}
\item[\rm (i)] $\bu_2 = (a_2, b_2, c_2)$ satisfies $a_2 = fh, b_2 = h$ and $c_2 = gh$;
\item[\rm (ii)] $L_2$ is given by 
\begin{equation}
\label{eq:nb5}
K^i_{f,g,h}:\; (\l^{qp^{ir}} - f^{q+1} h)x - f (\l^{qp^{ir}} - g^q h)y + fhz = 0.
\end{equation}
\end{itemize}
\end{itemize}
\end{lemma}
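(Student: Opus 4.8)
The plan is to follow the same strategy as in the proof of Lemma~\ref{lem:nb}, now anchoring the argument at the vertex $0 = \la 0,0,1\ra$ in place of $\infty = \la 0,1,0\ra$. Accordingly I would set $\bu_1 = (0,0,1)$ and invoke Definition~\ref{def:ugraph}: $(\la \bu_2\ra, L_2)$ is adjacent to $(0, N(\eta))$ (respectively $(0, L^*)$) if and only if there exist an integer $0 \le i < k$ and a nonisotropic $\bu_0 = (a_0, b_0, c_0)$ orthogonal to both $\bu_1$ and $\bu_2$ such that the lines $L_1, L_2$ determined by (\ref{eq:l1}) and (\ref{eq:l2}) are $N(\eta)$ (respectively $L^*$) and $L_2$. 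The reduction of the range from $0 \le i < 2e/r$ to $0 \le i < k$ is justified exactly as before, since $\l^{qp^{ir}}$ depends on $i$ only through its residue modulo $k$, so these $k$ values already produce every line $L_2$ that can occur.

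First I would extract the constraints coming from orthogonality and from (\ref{eq:l1}). Since $\bu_1 = (0,0,1)$, orthogonality of $\bu_0$ and $\bu_1$ reads $\b(\bu_0, \bu_1) = b_0 = 0$, and then nonisotropy of $\bu_0$ gives $a_0 \ne 0$. Expanding the determinant in (\ref{eq:l1}) along its second column (with entries $0,0,1$) yields the line $(b_0 + b_2)x - (a_0 + a_2)y = 0$. For part~(a) this equals $N(\eta)\colon \eta x - y = 0$ precisely when $a_0 + a_2 \ne 0$ and $b_2 = \eta(a_0 + a_2)$; for part~(b) it equals $L^*\colon x = 0$ precisely when $a_0 + a_2 = 0$ and $b_2 \ne 0$. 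The remaining orthogonality relation $\b(\bu_0, \bu_2) = -a_0 a_2^q + c_0 b_2^q = 0$ then determines $c_0$.

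With these relations in hand I would introduce the parameters. For part~(a) I would set $f = a_2/(a_0 + a_2)$, $g = c_2/(a_0 + a_2)$, $h = a_0$, so that $1 - f = a_0/(a_0 + a_2)$ recovers $a_2 = fh/(1-f)$, $b_2 = \eta h/(1-f)$, $c_2 = gh/(1-f)$ with $f \ne 1$ and $h \ne 0$; for part~(b) I would set $f = a_2/b_2$, $g = c_2/b_2$, $h = b_2$, giving $\bu_2 = (fh, h, gh)$ with $f, h \ne 0$. In each case the isotropy of $\bu_2$, namely $a_2^{q+1} = b_2 c_2^q + c_2 b_2^q$, becomes after dividing out the common factor exactly the stated field equation: $\eta^q g + \eta g^q = f^{q+1}$ in part~(a) and $g + g^q = f^{q+1}$ in part~(b); this last relation also forces $g \ne 0$, hence $c_2 \ne 0$, matching the claimed $f,g,h \in \FFF_{q^2}^*$.

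Finally I would check that the determinant in (\ref{eq:l2}), with $\bu_1 = (0,0,1)$ and $b_0 = 0$, simplifies to the asserted equation for $L_2$. Expanding along the first column gives $b_2(c_0 + \l^{qp^{ir}})x - (a_2(c_0 + \l^{qp^{ir}}) - a_0 c_2)y - a_0 b_2 z = 0$. Substituting the value of $c_0$ together with $f, g, h$ and dividing by the appropriate nonzero projective scalar ($(a_0+a_2)\eta^{1-q}$ in part~(a), $h$ in part~(b)) turns the $x$- and $z$-coefficients directly into those of (\ref{eq:nb3}) (respectively (\ref{eq:nb5})). The step I expect to be the main obstacle is the $y$-coefficient: the quantity $a_2(c_0 + \l^{qp^{ir}}) - a_0 c_2$ does not match on its face, and one must feed in the isotropy relation $f^{q+1} = \eta^q g + \eta g^q$ (respectively $f^{q+1} = g + g^q$) to cancel the $f^{q+1}$ term and recover the coefficient $\l^{qp^{ir}}\eta^{q-1}f + g^q h$ (respectively $f(\l^{qp^{ir}} - g^q h)$). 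Once this identity is verified the converse is immediate, because every listed choice of $i, f, g, h$ manifestly produces a nonisotropic $\bu_0$ orthogonal to $\bu_1$ and $\bu_2$ realizing the adjacency, which completes the proof.
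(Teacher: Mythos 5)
Your proposal is correct and follows essentially the same route as the paper's proof: set $\bu_1=(0,0,1)$, deduce $b_0=0$ and $a_0\ne 0$, read off the conditions under which (\ref{eq:l1}) gives $N(\eta)$ or $L^*$, solve for $c_0$ from $\b(\bu_0,\bu_2)=0$, and introduce the parameters $f,g,h$ exactly as you do. The only difference is one of explicitness — the paper compresses the final verification into ``one can check,'' whereas you correctly identify that the isotropy relation must be substituted to reconcile the $y$-coefficient of (\ref{eq:l2}) with those of (\ref{eq:nb3}) and (\ref{eq:nb5}).
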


\begin{proof}
(a) Denote $\bu_1 = (0, 0, 1)$. Then $(\la \bu_2 \ra, L_2)$ is adjacent to $(0, N(\eta))$ if and only if there exist an integer $0 \le i < k$ and a nonisotropic $\bu_0 = (a_0, b_0, c_0) \in V(3, q^2)$ orthogonal to both $\bu_1$ and $\bu_2$ such that $N(\eta)$ and $L_2$ are given by (\ref{eq:l1}) and (\ref{eq:l2}) respectively. Since $\bu_0, \bu_1$ are orthogonal, we have $b_0 = 0$. Using this and the fact that $\bu_0$ is nonisotropic, we get $a_0 \ne 0$. One can see that (\ref{eq:l1}) becomes $b_2 x - (a_0 + a_2)y = 0$, which
gives $N(\eta)$ if and only if $a_0 + a_2 \ne 0$ and $b_2 = \eta (a_0 + a_2)$.  
Since $\bu_0, \bu_2$ are orthogonal, we have $-a_0 a_2^q + c_0 b_2^q = 0$ and hence $c_0 = a_0 (a_2/b_2)^q = a_0 (a_2/(a_0 + a_2))^q/\eta^q$. Since $\bu_2$ is isotropic, we have $\eta (a_0 + a_2) c_2^q + \eta^q (a_0 + a_2)^q c_2 = a_2^{q+1}$. Setting $f = a_2/(a_0 + a_2)$, $g = c_2/(a_0 + a_2)$ and $h = a_0$, we have $f \in \FFF_{q^2} \setminus \{1\}$, $h \in \FFF^*_{q^2}$, $\eta^q g + \eta g^q = f^{q+1}$, $a_2 = fh/(1-f)$, $b_2 = \eta h/(1-f)$, $c_2 = gh/(1-f)$, and $L_2$ given by (\ref{eq:l2}) is exactly $L(\eta)^i_{f, g, h}$ in (\ref{eq:nb3}). 

(b) Let $\bu_0$ and $\bu_1$ be as above. As in (a), we have $b_0 = 0$ and $a_0 \ne 0$. One can see that (\ref{eq:l1}) becomes $b_2 x - (a_0 + a_2)y = 0$, which gives $L^*$ if and only if $a_0 = -a_2$ and $b_2 \ne 0$. Since $\bu_0, \bu_2$ are orthogonal, we have $c_0 = a_0 (a_2/b_2)^q =  -a_2^{q+1}/b_2^q$.  Set $f = a_2/b_2$, $g = c_2/b_2$ and $h = b_2$. Then $f, g, h \in \FFF^*_{q^2}$ and $g + g^q = f^{q+1}$ since $\bu_2 = (fh, h, gh)$ is isotropic. Now $L_2$ given by (\ref{eq:l2}) is exactly $K^i_{f, g, h}$ in (\ref{eq:nb5}). 
\qed
\end{proof}

For $(\s, M) \in V(q)$, denote
$$
\Ga(\s, M) = \mbox{neighbourhood of $(\s, M)$ in $\Ga$}.
$$  
In other words, $\Ga(\s, M)$ is the set of vertices of $\Ga$ adjacent to $(\s, M)$.
Note that $L = L(\l^q)^0_{0, 0, \l^q}$, $N = N_{0, 0, 1}^0 = N(\l^q)$ and in general $N_{0,0,1}^i = N(\l^{qp^{ir}})$. Lemmas \ref{lem:nb}(a) and  \ref{lem:nbn}(a) imply:

\begin{corollary}
\label{cor:nbn}
We have
$$
\Ga(\infty, L) \cap B(0) = \{(0, N(\l^{qp^{ir}})): 0 \le i < k\}.
$$ 
$$
\Ga(0, N) \cap B(\infty) = \{(\infty, L(\l^q)^i_{0,0,\l^q}): 0 \le i < k\}.
$$
In particular, $(\infty, L)$ and $(0, N)$ are adjacent in $\Ga$. Moreover, for distinct $\s, \t \in X$, any vertex $(\s, M) \in B(\s)$ other than $(\s, L(\s\t))$ has exactly $k$ neighbours in $B(\t)$. 
\end{corollary}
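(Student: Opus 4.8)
The plan is to extract the two displayed neighbourhoods directly from Lemmas~\ref{lem:nb}(a) and~\ref{lem:nbn}(a), to deduce the adjacency of $(\infty,L)$ and $(0,N)$ as a by-product, and then to upgrade the resulting count to an arbitrary pair of blocks using Theorem~\ref{thm:ugraph} together with the $2$-transitivity of $\PSU(3,q)$ on $X$. First I would handle $\Ga(\infty,L)\cap B(0)$. In Lemma~\ref{lem:nb}(a) the neighbour's vector is $\bu_2=(c/(1-a))(a,b,1)$, so its point is $\la a,b,1\ra$; requiring this to be $0=\la 0,0,1\ra$ forces $a=b=0$, and with the representative $(0,0,1)$ the remaining scalar is pinned to $c=1$. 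Then (\ref{eq:nb1}) collapses to $L_2=N^i_{0,0,1}=N(\l^{qp^{ir}})$, and since the $k$ numbers $\l^{qp^{ir}}$ with $0\le i<k$ form the $\la\psi^r\ra$-orbit of $\l^q$ they are pairwise distinct, giving exactly the asserted $k$ flags.

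The computation of $\Ga(0,N)\cap B(\infty)$ is the mirror image via Lemma~\ref{lem:nbn}(a) with $\eta=\l^q$: the neighbour's point is $\la f,\eta,g\ra$, so demanding it be $\infty=\la 0,1,0\ra$ forces $f=g=0$, and (\ref{eq:nb3}) then simplifies to the line written $L(\l^q)^i_{0,0,\l^q}$, again one flag for each $0\le i<k$. The ``in particular'' claim now falls out: by the remarks preceding the corollary, $(0,N)=(0,N(\l^q))$ is the $i=0$ term of the first set and $(\infty,L)=(\infty,L(\l^q)^0_{0,0,\l^q})$ the $i=0$ term of the second, so $(\infty,L)$ and $(0,N)$ are adjacent.

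For the uniform count in the final sentence I would pass to Theorem~\ref{thm:ugraph}. For every ordered pair of distinct blocks $B(\s),B(\t)$ it provides, after deleting the two exceptional vertices $(\s,L(\s\t))$ and $(\t,L(\s\t))$, a regular bipartite graph; let $d_{\s,\t}$ be its degree on the $B(\s)$ side. Because $\PSU(3,q)\le G$ is $2$-transitive on $X$, $G$ is transitive on ordered pairs of distinct blocks, so $d_{\s,\t}$ equals a constant $d$. Since $0\notin L$, the vertex $(\infty,L)$ is non-exceptional for $B(0)$, whence $d=|\Ga(\infty,L)\cap B(0)|=k$ by the first step. Consequently, for distinct $\s,\t$ and any $(\s,M)$ with $M\ne L(\s\t)$ --- i.e.\ any vertex of $B(\s)$ on the non-exceptional side for $B(\t)$ --- we obtain $|\Ga(\s,M)\cap B(\t)|=d=k$, as required.

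The step I expect to be the main obstacle is the normalisation underlying the first two paragraphs. Each lemma carries a free scalar ($c$ in~\ref{lem:nb}(a), $h$ in~\ref{lem:nbn}(a)) recording the choice of representative of the neighbour's point; left unconstrained it would apparently produce a whole pencil of admissible lines through $0$, respectively $\infty$, rather than $k$ of them. The real work is to fix the representative once and for all, so that imposing the block constraint leaves only the discrete index $i$, and, since $\infty$ has a zero coordinate, to make this choice for $\infty$ compatible with the symmetry of adjacency guaranteed by the hypothesis on $\l$, so that the single edge $(\infty,L)\sim(0,N)$ is produced with $i=0$ consistently from both descriptions.
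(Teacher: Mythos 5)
Your outline is the paper's own: the two displays are obtained by specialising Lemmas~\ref{lem:nb}(a) and~\ref{lem:nbn}(a) to neighbours whose point--entry is $0$ (forcing $a=b=0$), respectively $\infty$ (forcing $f=g=0$); the ``in particular'' clause is the $i=0$ term of either list; and the uniform count $k$ is propagated from $|\Ga(\infty,L)\cap B(0)|=k$ by transitivity. Your last step (regularity of the bipartite graph between two punctured blocks plus $2$-transitivity on ordered pairs of blocks) carries exactly the same information as the paper's combination of $2$-transitivity of $G$ on $X$ with transitivity of $G_{\infty,0}$ on a punctured block, so that part is only cosmetically different. The one point where your plan, as stated, would go wrong is the normalisation you yourself flag. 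Pinning the free scalar by taking the canonical representative of the \emph{neighbour's} point works for the first display (in the proof of Lemma~\ref{lem:nb}(a) one has $c=a_0=c_2-a_2$, so the representative $(0,0,1)$ of $0$ gives $c=1$), but in the proof of Lemma~\ref{lem:nbn}(a) the scalar is $h=a_0=b_2/\eta$, so the representative $(0,1,0)$ of $\infty$ would give $h=\l^{-q}$, and the resulting lines $L(\l^q)^i_{0,0,\l^{-q}}$, namely $x=\l^{-q-qp^{ir}}z$, do \emph{not} include $L\colon x=z$ --- contradicting the adjacency of $(\infty,L)$ and $(0,N)$ that the first display already yields. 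The value the corollary needs is $h=\l^q$ (so that $L=L(\l^q)^0_{0,0,\l^q}$), which the paper asserts without derivation. A robust way to close this, rather than guessing the right representative, is to use Theorem~\ref{thm:ugraph}: the degree is $kq(q^2-1)$, the neighbours of a vertex of $B(\s)$ lie in exactly $q(q^2-1)$ blocks, and the stabiliser of that vertex is transitive on its neighbourhood and preserves $\BB$, so each such block contains exactly $k$ neighbours; since the $k$ flags listed in each display are pairwise distinct (the exponents $\l^{qp^{ir}}$, $0\le i<k$, form a full $\la\psi^r\ra$-orbit) and are realised by admissible parameter choices in the lemmas, they exhaust the intersection.
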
 
 
The last statement follows from the fact that $|\Ga(\infty, L) \cap B(0)|=k$, $G$ is 2-transitive on $X$, and $G_{\infty, 0}$ is transitive on $B(0) \setminus \{(0, L^*)\}$. Here and in the following $G_{\infty, 0}$ denotes the point-wise stabilizer of $\{\infty, 0\}$ in $G$, that is, the subgroup of $G$ consisting of those elements of $G$ which fix both $\infty$ and $0$. 

\bigskip
\begin{proof}{\bf of Theorem \ref{thm:con}}~~The statements in (a)-(b) can be restated as follows.
\begin{itemize}
\item[(a)] $|\Ga(\s, M) \cap \Ga(\t, K)| \ge q^2 (q-2)$, for any distinct $\s, \t \in X$ and any $(\s, M) \in B(\s), (\t, K) \in B(\t)$;
\item[(b)] $|\Ga(\s, N_1) \cap \Ga(\s, N_2)| \ge k(k-1)q$, for any $\s \in X$ and $(\s, N_1), (\s, N_2) \in B(\s)$ with $N_1 \ne N_2$.
\end{itemize}

\textit{Proof of (a):}~~Since $G$ is 2-transitive on $X$, it suffices to prove (a) for $\s = \infty$ and $\t = 0$.
Noting that $L(\infty 0) = L^*$, we have three possibilities to consider.

\smallskip 
Case 1: $M, K \ne L^*$.
\smallskip

Since $\Ga$ is $G$-arc transitive and $(\infty, L^*)$ is the only vertex of $B(\infty)$ not adjacent to any vertex of $B(0)$ (Theorem \ref{thm:ugraph}), $G_{\infty, 0}$ is transitive on $B(\infty) \setminus \{(\infty, L^*)\}$. So it suffices to prove $|\Ga(\infty, L) \cap \Ga(0, N(\eta))| \ge q^2 (q-2)$ for any $\eta \in \FFF_{q^2}^*$ in this case.

By Lemmas \ref{lem:nb}(a) and \ref{lem:nbn}(a), a vertex $(\la \bu_2 \ra, L_2) \in V(q)$ is adjacent to both $(\infty, L)$ and $(0, N(\eta))$ if and only if there exist $0 \le i, j < k$, $a, f \in \FFF_{q^2} \setminus \{1\}, b, g \in \FFF_{q^2}, c, h \in \FFF^*_{q^2}$ with $b+b^q = a^{q+1}$ and $\eta^q g+\eta g^q = f^{q+1}$ such that $a_2 = ac/(1-a) = fh/(1-f), b_2 = bc/(1-a) = \eta h/(1-f)$, $c_2 = c/(1-a) = gh/(1-f)$ and $L_2 = N_{a,b,c}^i = L(\eta)^j_{f,g,h}$. From these relations we have $f=\eta a/b$, $g = \eta/b$, $h = c(b-\eta a)/\eta (1-a)$. Thus the equation of $L(\eta)^j_{f,g,h}$ as given in (\ref{eq:nb3}) becomes 
$$
(\l^{qp^{jr}} b^q + a^q c d)x - (\l^{qp^{jr}} a b^{q-1} + cd)y - b^q cd z = 0,\;\mbox{where}\; d = (b-\eta a)/\eta (1-a).
$$
This equation gives $N_{a,b,c}^i$ (see (\ref{eq:nb1})) if and only if $(\l^{qp^{ir}} + a^q c)(\l^{qp^{jr}} a b^{q-1} + cd) = c (\l^{qp^{jr}} b^q + a^q c d)$ (which implies $b \ne 0$ as $c, h \ne 0$) and $(\l^{qp^{ir}} + a^q c) b^q cd = (\l^{qp^{ir}} a + b^q c)(\l^{qp^{jr}} b^q + a^q c d)$, or equivalently
\begin{equation}
\label{eq:c}
\left(\l^{q(p^{ir}-p^{jr})} d +b^{2q-1}\right)c = - \l^{qp^{ir}} ab^{q-1}.
\end{equation}
Since $b^q = a^{q+1}-b$, the coefficient of $c$ here is equal to zero if and only if $b$ satisfies a quadratic equation, which has at most two solutions. Since for any $0 \le i, j < k$ and $a \in \FFF_{q^2} \setminus \{1\}$, the equation $b+b^q = a^{q+1}$ about $b$ has $q > 2$ solutions, there are at least $q-2 \ge 1$ values of $b$ that satisfy $b+b^q = a^{q+1}$ and $\l^{q(p^{ir}-p^{jr})} d +b^{2q-1} \ne 0$. Each such tuple $(i, j, a, b)$ determines a unique $c$ via (\ref{eq:c}) and hence a unique common neighour of $(\infty, L)$ and $(0, N(\eta))$. Moreover, since $\la \bu_2 \ra = \la a, b, 1 \ra$, for different pairs $(a, b)$ the vertices $(\la \bu_2 \ra, L_2)$ belong to different blocks of $\BB$ and so are distinct. Therefore, $|\Ga(\infty, L) \cap \Ga(0, N(\eta))| \ge q^2 (q-2)$.   

\smallskip  
Case 2: $M = L^*$ but $K \ne L^*$.
\smallskip 

It suffices to prove $|\Ga(\infty, L^*) \cap \Ga(0, N(\eta))| \ge q^2 (q-2)$ for any $\eta \in \FFF_{q^2}^*$. By Lemmas \ref{lem:nb}(b) and \ref{lem:nbn}(a), a vertex $(\la \bu_2 \ra, L_2) \in V(q)$ is adjacent to both $(\infty, L^*)$ and $(0, N(\eta))$ if and only if there exist $0 \le i, j < k$, $a, b, c \in \FFF^*_{q^2}$ with $b+b^q = a^{q+1}$ and $f \in \FFF_{q^2} \setminus \{1\}, g \in \FFF_{q^2}, h \in \FFF^*_{q^2}$ with $\eta^q g+\eta g^q = f^{q+1}$ such that $a_2 = ac = fh/(1-f)$, $b_2 = bc = \eta h/(1-f)$, $c_2 = c = gh/(1-f)$ and $L_2 = M_{a,b,c}^i = L(\eta)_{f,g,h}^j$. From these relations we have $f=\eta a/b$ (which implies $\eta a \ne b$ as $f \ne 1$), $g = \eta/b$ and $h = c(b-\eta a)/\eta$. Plugging these into (\ref{eq:nb3}), the equation of $L(\eta)_{f,g,h}^j$ becomes
$$
(\l^{qp^{jr}} b^q + a^q c d)x - (\l^{qp^{jr}} ab^{q-1} + cd)y - b^q c d z = 0,\;\mbox{where}\; d = (b-\eta a)/\eta.
$$
This equation gives $M_{a,b,c}^i$ (see (\ref{eq:nb4})) if and only if $(a^{q+1}c - \l^{qp^{ir}})(\l^{qp^{jr}} ab^{q-1} + cd) = ac(\l^{qp^{jr}} b^q + a^q c d)$ and $- b^q c d (a^{q+1}c - \l^{qp^{ir}}) = a(\l^{qp^{ir}} - b^q c)(\l^{qp^{jr}} b^q + a^q c d)$, that is, 
$$
\left(ab^{2q-1} - \l^{q(p^{ir}-p^{jr})} d\right)c = \l^{qp^{ir}} ab^{q-1}.
$$
The remaining proof is similar to Case 1 above. 

\smallskip 
Case 3: $M = K = L^*$. 
\smallskip 

In this case we are required to prove $|\Ga(\infty, L^*) \cap \Ga(0, L^*)| \ge q^2 (q-2)$. By Lemmas \ref{lem:nb}(b) and \ref{lem:nbn}(b), a vertex $(\la \bu_2 \ra, L_2) \in V(q)$ is adjacent to both $(\infty, L^*)$ and $(0, L^*)$ if and only if there exist $0 \le i, j < k$, $a, b, c, f, g, h \in  \FFF^*_{q^2}$ with $b+b^q = a^{q+1}$ and $g+g^q = f^{q+1}$ such that $a_2 = ac = fh$, $b_2 = bc = h$, $c_2 = c = gh$ and $L_2 = M_{a,b,c}^i = K_{f,g,h}^j$. From these relations we have $f=a/b, g = 1/b$ and $h = bc$. Plugging these into (\ref{eq:nb5}), the equation of $K_{f,g,h}^j$ becomes
$$
(\l^{qp^{jr}} b^q - a^{q+1} c)x - a (\l^{qp^{jr}} b^{q-1} - c)y + ab^q cz = 0.
$$
This is identical to $M_{a,b,c}^i$ (see (\ref{eq:nb4})) if and only if $(a^{q+1}c - \l^{qp^{ir}}) a (\l^{qp^{jr}} b^{q-1} - c) = ac(\l^{qp^{jr}} b^q - a^{q+1} c)$ and $acab^q c = a(\l^{qp^{ir}} - b^q c)a (\l^{qp^{jr}} b^{q-1} - c)$, that is,
$$
\left(\l^{q(p^{ir}-p^{jr})} + b^{2q-1}\right) c = \l^{qp^{ir}} b^{q-1}.
$$
The rest of the proof is similar to Case 1 above.
 
\textit{Proof of (b):}~~Since $\Ga$ is $G$-vertex transitive, it suffices to prove $|\Ga(0, N_1) \cap \Ga(0, N_2)| = k(k-1)q$ for distinct $(0, N_1), (0, N_2) \in B(0)$. 

Consider $(0, N(\eta_1)), (0, N(\eta_2)) \in B(0) \setminus \{(0, L^*)\}$ first, where $\eta_1, \eta_2 \in \FFF_{q^2}^*$ are distinct. By Lemma \ref{lem:nbn}(a), a vertex $(\la \bu_2 \ra, L_2) \in V(q)$ is in both $\Ga(0, N(\eta_1))$ and $\Ga(0, N(\eta_2))$ if and only if there exist $0 \le i, j < k$,
$f_t \in \FFF_{q^2} \setminus \{1\}, g_t \in \FFF_{q^2}$ and $h_t \in \FFF^*_{q^2}$ with $\eta_t^q g_t + \eta_t g_t^q = f_t^{q+1}$ such that $\bu_2 = (a_2, b_2, c_2)$ satisfies $a_2 = f_t h_t/(1-f_t), b_2 = \eta_t h_t/(1-f_t)$ and $c_2 = g_t h_t/(1-f_t)$, for $t=1,2$, and $L_2 = L(\eta_1)^i_{f_1, g_1, h_1} = L(\eta_2)^j_{f_2, g_2, h_2}$. Thus $f_2 = (\eta_2/\eta_1)f_1$, $g_2 = (\eta_2/\eta_1)g_1$ and 
$h_2 = h_1 (\eta_1 -\eta_2 f_1)/(\eta_2 -\eta_2 f_1)$. Note that $f_2 \ne 1$ implies $f_1 \ne \eta_1/\eta_2$. Using these relations, the equation of $L(\eta_2)^j_{f_2, g_2, h_2}$ (see (\ref{eq:nb3})) can be simplified to 
$$
\left(\l^{qp^{jr}} \eta_1^q \cdot \frac{\eta_2 -\eta_2 f_1}{\eta_1 -\eta_2 f_1}+ f_1^q h_1 \right)x - \left(\l^{qp^{jr}} \eta_1^{q-1} f_1 \cdot \frac{\eta_2 -\eta_2 f_1}{\eta_1 -\eta_2 f_1} + g_1^q h_1\right)y - \eta_1^q h_1 z = 0.
$$  
This gives the equation of $L(\eta_1)^i_{f_1, g_1, h_1}$ (see (\ref{eq:nb3})) if and only if $(\eta_2 -\eta_2 f_1)/(\eta_1 -\eta_2 f_1) = \l^{q(p^{ir}-p^{jr})}$, or equivalently 
$$
f_1 = \frac{\l^{q(p^{ir}-p^{jr})} \eta_1 - \eta_2}{\l^{q(p^{ir}-p^{jr})} \eta_2 - \eta_2}. 
$$
Here we note that $\l^{q(p^{ir}-p^{jr})} \ne 1$ for $0 \le i \ne j < k$. Since $\eta_1 \ne \eta_2$, the right-hand side of this expression is neither $1$ nor $\eta_1/\eta_2$. Thus there are $k(k-1)$ possible choices of $f_1$, and each of them corresponds to exactly $q$ values of $g_1$ by $\eta_1^q g_1 + \eta_1 g_1^q = f_1^{q+1}$. It follows that $|\Ga(0, N(\eta_1)) \cap \Ga(0, N(\eta_2))| = k(k-1)q$.

It remains to prove $|\Ga(0, L^*) \cap \Ga(0, N(\eta))| = k(k-1)q$ for any $\eta \in \FFF^*_{q^2}$. By Lemma \ref{lem:nbn}, a vertex $(\la \bu_2 \ra, L_2) \in V(q)$ is in both $\Ga(0, L^*)$ and $\Ga(0, N(\eta))$ if and only if there exist $0 \le i, j < k$, $f_1, g_1, h_1 \in \FFF^*_{q^2}$ with $g_1 + g_1^q = f_1^{q+1}$, and $f_2 \in \FFF_{q^2} \setminus \{1\}, g_2 \in \FFF_{q^2}$, $h_2 \in \FFF^*_{q^2}$ with $\eta^q g_2 + \eta g_2^q = f_2^{q+1}$, such that $\bu_2 = (a_2, b_2, c_2)$ satisfies $a_2 = f_1 h_1 = f_2 h_2/(1-f_2), b_2 = h_1 = \eta h_2/(1-f_2)$ and $c_2 = g_1 h_1 = g_2 h_2/(1-f_2)$, and $L_2 = K^i_{f_1, g_1, h_1} = L(\eta)^j_{f_2, g_2, h_2}$. Thus $f_2 = \eta f_1$, $g_2 = \eta g_1$, $h_2 = h_1 (1 - \eta f_1)/\eta$, and so $f_1 \ne 1/\eta$ as $f_2 \ne 1$. Using these relations and (\ref{eq:nb3}), the equation of $L(\eta)^j_{f_2, g_2, h_2}$ can be simplified to 
$$
\left(-\l^{qp^{jr}} \cdot \frac{\eta f_1}{1-\eta f_1} - f_1^{q+1} h_1\right)x + f_1 \left(\l^{qp^{jr}} \cdot \frac{\eta f_1}{1-\eta f_1} + g_1^q h_1\right)y + f_1 h_1 z = 0.
$$
One can see that this gives $K^i_{f_1, g_1, h_1}$ (see (\ref{eq:nb5})) if and only if 
$$
f_1 = \frac{\l^{q(p^{ir}-p^{jr})}}{(\l^{q(p^{ir}-p^{jr})} - 1)\eta}. 
$$
Note that the right-hand side of this equation is neither $0$ nor $1/\eta$. Similarly as in the previous paragraph, we obtain $|\Ga(0, L^*) \cap \Ga(0, N(\eta))| = k(k-1)q$. 

So far we have completed the proof of (a) and (b).
 
Note that $\Ga$ is not a complete graph since, for example, $(\infty, L^*)$ and $(0, L^*)$ are not adjacent. Since $q > 2$, by (a) the distance in $\Ga$ between any two nonadjacent vertices is equal to two. So $\Ga$ has diameter two. Since $(0, N)$ and $(\infty, L)$ are adjacent by Corollary \ref{cor:nbn} and they have at least one common neighbour by (a), $\Ga$ has girth three. 
\qed
\end{proof}

\begin{proof}{\bf of Theorem \ref{thm:deg diam}}~~Let $q = p^e > 2$. Choose $r=e$ and $\l \in \FFF^*_q$. It is trivial that $\l^q$ ($=\l^{p^r}$) is in the $\la \psi^r \ra$-orbit containing $\l$. Hence $\Ga_{e,\l}(q)$ is well-defined, and is connected of diameter two by Theorem \ref{thm:con}. The assumption $\l \in \FFF^*_q$ ensures $\l^q = \l$ and so $k_{e,\l}(q) = 1$. Thus, by Theorem \ref{thm:ugraph}, $\Ga_{e,\l}(q)$ has order $q^2 (q^3 + 1)$ and degree $q(q^2 - 1)$. From this (\ref{eq:deg diam}) follows immediately. 
 
Now for $\De = q(q^2 - 1)$ we have $q > \De^{1/3}$. Thus $q^2 (q^3 + 1) = q^2 (\De + q + 1) = \De q^2 + q^3 + q^2 = \De (q^2 + 1) + q^2 + q > \De (\De^{2/3}+1) + \De^{2/3} + \De^{1/3}$ as claimed in (\ref{eq:deg diam1}).
\qed
\end{proof}

\section{Remarks}
\label{sec:rem}

In the case when $D=2$, the well known Moore bound \cite{MS} gives $N(\De, 2) \le \De^2 + 1$ for any $\De$. The equality holds only when $\De = 1, 2, 3, 7$ and possibly $57$, and for all other $\De$ we have $N(\De, 2) \le \De^2 - 1$ (see \cite{MS}). It is known \cite{Brown} that $N(\De, 2) \ge \De^2 - \De + 1$ for every $\De$ such that $\De - 1$ is a prime. It is proved in \cite{MMS} that the counterpart $N^{vt}(\De, 2)$ of $N^{at}(\De, 2)$ for vertex-transitive graphs satisfies $N^{vt}(\De, 2) \ge 8(\De + (1/2))^2/9$ if $\De = (3q-1)/2$, where $q$ is a prime power congruent to 1 modulo 4. This bound came with the discovery \cite{MMS} of an infinite family of vertex-transitive graphs $H_q$ (now well known as the McKay-Miller-\v{S}ir\'{a}\v{n} graphs) with degree $\De = (3q-1)/2$ and order $8(\De + (1/2))^2/9$. Since, as implied in \cite[Definition 11, Lemma 17]{Hafner}, such extremal graphs cannot be arc-transitive except for the Hoffman-Singleton graph $H_5$, the same bound may not apply to $N^{at}(\De, 2)$. 

In view of (\ref{eq:deg diam1}) and the comments above, it is natural to ask whether there exist infinitely many $\De \ge 3$ such that $N^{at}(\De, 2) \ge c \De^2$ for some constant $c > 0$. 
One may also ask whether there exists a constant $c > 0$ such that $N^{at}(\De, 2) \ge c \De^2$ for all $\De \ge 3$. However, this would not make much sense unless the same question for $N^{vt}(\De, 2)$ has an affirmative answer which, to the best of our knowledge, is unknown at present.

\bigskip

\noindent \textbf{Acknowledgements}\quad We appreciate Dr. Guillermo Pineda-Villavicencio for helpful discussions on the degree-diameter problem. The author was supported by a Future Fellowship (FT110100629) of the Australian Research Council.

{\small

}

\end{document}